\documentclass{elsarticle}
\usepackage{amsmath,amssymb,latexsym,amsmath,enumerate,verbatim,amsfonts,mathrsfs,bm}
\newtheorem{definition}{Definition}[section]
\newtheorem{theorem}{Theorem}[section]

\newtheorem{lemma}{Lemma}[section]
\newtheorem{example}{Example}[section]

\numberwithin{equation}{section}
\begin{document}
	
\title{Effective Second-Harmonic Generation Coefficient  and C-eigenvalue of  Nonlinear Susceptibility Tensors}

\author[1]{Die Xiao}
\author[1]{Yisheng Song\corref{mycorrespondingauthor}}\cortext[mycorrespondingauthor]{Corresponding author.  E-mail address: yisheng.song@cqnu.edu.cn (Yisheng Song).}

\affiliation[1]{organization={School of Mathematical Sciences, Chongqing Normal University},
	city={Chongqing},
	postcode={401331}, 
	country={P.R. China}\\ { Email: 13183937768@163.com (Xiao);  yisheng.song@cqnu.edu.cn  (Song)}}

\begin{abstract} The effective second-harmonic generation (SHG) coefficient is a crucial data that quantifies the efficiency of transforming fundamental frequency light into its second harmonic.  With the help of the symmetry of nonlinear optical susceptibility tensors,   we mainly discuss  the computability of such a effective SHG coefficient in uniaxial crystals.  For one thing, the calculation of  effective SHG coefficient is  converted into the optimization models with some geometric constraints by means of  the peculiarity of fundamental frequency light. Secondly, the number of variables of  such  maximum models  are cutted in half to $2$ to  calculate it easier, and a comparison between the effective SHG coefficient and C-eigenvalue of susceptibility tensor is given also. Finally, some examples of typical crystal classes are presented to verify the correctness and broader applicabilities of the theoretical results.
\end{abstract}

\begin{keyword}
Piezoelectric-type tensor; C-eigenvalue; Effective SHG coefficient; Phase matching; Uniaxial crystal; Nonlinear optics
\end{keyword}
\maketitle
\section{Introduction}
In 1961, the first nonlinear optical effect, Second Harmonic Generation (SHG)  is discovered \cite{FHPW1961}. For decades, as the most fundamental second-order nonlinear optical effect, it serves as a core technology for laser wavelength conversion and plays an irreplaceable role in various scientific domains such as all-solid-state lasers, physics, chemistry, materialogy, quantum communication, and biological imaging \cite{B2020, CLH1992, KBS2014,LT2014}. In nonlinear optics, the fundamental frequency light excites the crystal to generate ``second-order polarization harmonic," which may be written by the nonlinear susceptibility tensor $\overleftrightarrow{\bm\chi}^{(2)}=(\chi_{ijk}^{(2)})$ \cite{B2020,CLH1992,J1970}. Its expression  may be said as,  \begin{equation}\label{eq:11}\
	\vec{\bm P}^{(2)}=\varepsilon_0\overleftrightarrow{\bm\chi}^{(2)}\vec{\bm E}\vec{\bm E},
\end{equation} where $\vec{\bm P}^{(2)}$ is the second-order nonlinear polarization, $\varepsilon_0$ is the vacuum  permittivity, and $\vec{\bm E}$
is the electric field of the fundamental beam.  Due to the dispersion characteristics of crystals, this generated ``second-order polarization harmonic" cannot keep up with the produced frequency-doubled light, leading to a phase mismatch. Then the phase matching is required to compensate for this, and hence, four types of phase matching are involved, resulting in four different effective SHG coefficients,
\begin{align}\label{eq:12}
	\chi_{\text{eff}}^{oo-e}=&\overleftrightarrow{\bm\chi}^{(2)}{\bm b}{\bm a}{\bm a}=\sum_{i,j,k=1}^3\chi_{ijk}^{(2)}b_ia_ja_k;\\
\label{eq:13}
\chi_{\text{eff}}^{ee-o}=&\overleftrightarrow{\bm\chi}^{(2)}{\bm a}{\bm b}{\bm b}=\sum_{i,j,k=1}^3\chi_{ijk}^{(2)}a_ib_jb_k; \\
\label{eq:14}
\chi_{\text{eff}}^{oe-e}=&\overleftrightarrow{\bm\chi}^{(2)}{\bm b}{\bm a}{\bm b}=\sum_{i,j,k=1}^3\chi_{ijk}^{(2)}b_ia_jb_k;\\
\label{eq:15}
\chi_{\text{eff}}^{eo-o}=&\overleftrightarrow{\bm\chi}^{(2)}{\bm a}{\bm b}{\bm a}=\sum_{i,j,k=1}^3\chi_{ijk}^{(2)}a_ib_ja_k,
\end{align}
here, ${\bm a}$ is a vector with its element $a_i$,  the ratio between the component of basic-frequency light $o$ photoelectric field and its amplitude, ${\bm b}$ is a vector with its element $b_i$,  the ratio between the component of basic-frequency light $e$ photoelectric field and its amplitude. The effective SHG coefficient is a core parameter for evaluating the nonlinear optical performance of materials. Its value depends not only on the microscopic nonlinear polarization properties of the crystal itself but also on the phase matching conditions in three-wave interactions, the polarization direction of the optical field, and the crystal orientation. It directly determines the upper limit of harmonic conversion efficiency, which need solve maximum of the above effective SHG coefficients. Therefore, accurately calculating the effective SHG coefficient is a prerequisite and key step for designing new nonlinear optical crystals, optimizing devices, and evaluating performance \cite{CJ2000, CY2005,WC1992,ZYZ2024}.

 The nonlinear susceptibility tensor $\overleftrightarrow{\bm\chi}^{(2)}=(\chi_{ijk}^{(2)})$ with the symmetry of its last two indices  is a class of piezoelectric-type tensors  \cite{B2020,CLH1992}. So, solving effective SHG coefficients may be obtained by means of some methods and algorithms of piezoelectric-type tensors. This class of tensors may trace back to 1880, the piezoelectric effect discovered by Curie brothers \cite{CC1880}. Until 1910,  Voigt \cite{V1966} first rigorously defined the piezoelectric constants using tensor analysis, establishing the mathematical formulation of the piezoelectric tensor and systematically expressing the relationship between mechanical and electrical quantities in tensor form. Chen-J\'akli-Qi \cite{CJQ2023} introduced the concepts of  C-eigenvalues, and showed concrete physical meanings of the largest C-eigenvalues and C-eigenvectors  in iezoelectric effect and converse piezoelectric effect.
 \begin{definition}[Piezoelectric-type Tensor]
 	Let  $\mathscr{T}= (T_{ijk}) \in \mathbb{R}^{n \times n \times n}$ be a third-order real tensor. If its components satisfy the symmetry condition,
 	\[
 	T_{ijk}=T_{ikj},\quad \forall i,j,k\in \{1,2,\cdots,n\}
 	\]
 	then$ \mathscr{T}$is called a \textbf{piezoelectric-type tensor}.  If there exist a real number \( \lambda \in \mathbb{R} \) and unit vectors \( {\bm x}, {\bm y} \in \mathbb{R}^n \) such that the following system of equations holds:
 	$$
 		\mathscr{T}{\bm y}{\bm y}=\lambda {\bm x},  \ \
 		\mathscr{T}{\bm x}{\bm y}=\lambda {\bm y} ,\ \
 		{\bm x}^\top {\bm x}=1 ,\ \ 	{\bm y}^\top  {\bm y}=1,
 	$$
 	where \( (\mathscr{T}{\bm y}{\bm y})_i = \sum\limits_{j,k=1}^n T_{ijk} y_j y_k \) and \( (\mathscr{T}{\bm x}{\bm y})_k = \sum\limits_{i,j=1}^n T_{ijk} x_i y_j \), then \( \lambda \) is called a \textbf{C-eigenvalue} of \( \mathscr{T} \), and \( {\bm x}, {\bm y} \) are the corresponding \textbf{left and right C-eigenvectors}, respectively.
 \end{definition} 
 
 Over the years that followed, the C-eigenvalue gradually became one of  important concepts in tensor analysis and have been widely applied in some research areas such as piezoelectric materials  and liquid crystal physics \cite{CJQ2023,CQV2018,G2016,LLJ2025,N1985}. Some different methods of calculating the largest  C-eigenvalue of a piezoelectric-type tensor had extensively studied and they have yielded some various insights, and moreover,  these methods have been well established in the scientific community \cite{LY2021,LLJ2025,LM2022,WCW2020,WCW2023,YL2022,ZLS2023,ZL2022}. Also many works  focused on  identifying inclusion sets for C-eigenvalues such as Refs. \cite{CCW2019,HLX2021,LLL2019,LLK2014,LZL2016,LL2016,LYL2021,WCW2020,WYS2022} and others.   Recently,  Wang \cite{W2025} solved  the analytical expression of C-eigenvalues and C-eigenpairs of a three-dimensional piezoelectric-type tensor by putting its components  in categories. 

In this paper,  we mainly discuss the calculations of mathematical models for the effective SHG coefficients in uniaxial crystals, and furthermore, to  compute it easier, we establish how to diminish the number of variables of  such  optimization models.   The relationship between the effective SHG coefficients and the largest C-eigenvalue of susceptibility tensor is showed also. Finally, some examples of typical crystal classes are provided to test our conclusions.

\section{Optimization Models of Effective SHG Coefficients}

In uniaxial crystals, a basic-frequency light field is divided into isotropic ordinary light (o-light) and anisotropic extraordinary light (e-light). Based on the phase matching condition, four effective SHG coefficients are distinguished two different types of interaction: the cases $\chi_{\text{eff}}^{oo-e}$ \eqref{eq:12} and $\chi_{\text{eff}}^{ee-o}$ \eqref{eq:13} are called type-I phase matching, the cases $\chi_{\text{eff}}^{oe-e}$ \eqref{eq:14} and $\chi_{\text{eff}}^{eo-o}$ \eqref{eq:15}  are called type-II phase matching. If all indices of nonlinear susceptibility tensor  can arbitrarily be permuted, i.e., $$ \chi_{ijk}^{(2)} =\chi_{jki}^{(2)} \mbox{ and } \chi_{ijk}^{(2)} =\chi_{ikj}^{(2)},$$ then such a  susceptibility tensor is called \textbf{Kleinman symmetry} \cite{K1962}. That is,  the tensor $\overleftrightarrow{\bm\chi}^{(2)}=(\chi_{ijk}^{(2)})$ is \textbf{symmetry}.  Clearly, if $\overleftrightarrow{\bm\chi}^{(2)}$ is Kleinman symmetry, then $$\chi_{\text{eff}}^{oo-e}=\chi_{\text{eff}}^{eo-o}\mbox{ and }\chi_{\text{eff}}^{ee-o}=\chi_{\text{eff}}^{oe-e}.$$

In uniaxial crystals, we choose a crystal coordinate system such that the light propagation direction is collinear with the $x_3  $-axis. Then ${\bm a}$  is  a unit vector with its element $a_i$,   the ratio between the component of basic-frequency o-light electric field and its amplitude and ${\bm b}$ is a vector with its element $b_i$,  the ratio between the component of basic-frequency light $e$ photoelectric field and its amplitude, 
\begin{equation}\label{eq:21}
{\bm a}=(\sin\varphi,-\cos\varphi,0)^\top,
\end{equation}
\begin{equation}\label{eq:22}
{\bm b}=(-\cos\theta\cos\varphi,-\cos\theta\sin\varphi,\sin\theta)^\top,
\end{equation}
where $\theta$ is the angle between the wavevector  and the optic axis,  $\varphi$ is the angle between projection of the wavevector  to $x_1\times x_2$ coordinate plane  and the $x_1$- axis.
Then two vectors satisfy the geometric constraints,\begin{equation}\label{eq:23}{\bm a}^\top{\bm a}={\bm b}^\top{\bm b}=1\mbox{ and }{\bm a}^\top{\bm b}=a_1b_1+a_2b_2=0.\end{equation}

Let $z_1=\cos\theta$ and $z_2=\sin\theta$. Then  from \eqref{eq:22}, it follows that
\begin{equation}\label{eq:24}
	{\bm b}=(a_2z_1,-a_1z_1,z_2)^\top.
\end{equation}
Assume that $\mathscr{T}=\overleftrightarrow{\bm\chi}^{(2)}\in\mathbb{R}^{3\times3\times3}$ is a nonlinear susceptibility tensor.  Then based on Kleinman symmetry, the unified form of the effective SHG coefficient is the trilinear tensor contraction:
\[
d^1_{\text{eff}}({\bm a}, {\bm b})=\mathscr{T}{\bm b}{\bm a}{\bm a}=\sum_{i,j,k=1}^3T_{ijk}b_ia_ja_k,
\]
\[
d^2_{\text{eff}}({\bm b}, {\bm a})=\mathscr{T}{\bm a}{\bm b}{\bm b}=\sum_{i,j,k=1}^3T_{ijk}a_ib_jb_k.
\]
So, the optimization models given by solving the largest effective SHG coefficient may turn into the following forms, 
\begin{equation}\label{eq:25}
	\max\{\mathscr{T}{\bm b}{\bm a}{\bm a}: \ {\bm a}^\top{\bm a}={\bm b}^\top{\bm b}=1, a_3=0\mbox{ and }a_1b_1+a_2b_2=0 \};
\end{equation}
\begin{equation}\label{eq:26}
\max\{\mathscr{T}{\bm a}{\bm b}{\bm b}: \ {\bm a}^\top{\bm a}={\bm b}^\top{\bm b}=1, a_3=0\mbox{ and }a_1b_1+a_2b_2=0 \};
\end{equation} or
\begin{equation}\label{eq:27}
	d_{\text{eff}}^1=\max\{\mathscr{T}{\bm b}{\bm a}{\bm a}: \ a_1^2+a_2^2=z_1^2+z_2^2=1\mbox{ and }a_3=0 \};
\end{equation}
\begin{equation}\label{eq:28}
d_{\text{eff}}^2=	\max\{\mathscr{T}{\bm a}{\bm b}{\bm b}: \ a_1^2+a_2^2=z_1^2+z_2^2=1\mbox{ and }a_3=0 \}.
\end{equation}

Chen-J\'akli-Qi \cite{CJQ2023} presented  the following nice properites of the (largest) C-eigenvalue of Piezoelectric-type Tensor, and  the largest C-eigenvalue  is maximum value of some optimized model on a unit sphere. The physical significance of the C-eigenvalue problem is that it describes the maximum ability of a piezoelectric material to generate an electrical response under a unit mechanical stimulus. The largest C-eigenvalue corresponds to the optimal electromechanical coupling direction, where the piezoelectric performance of the material reaches its peak.
\begin{theorem}[C-eigenvalues of Piezoelectric-type Tensor] \cite{CJQ2023} \label{thm:21} Let  $\mathscr{T}= (T_{ijk})$ be a  piezoelectric-type tensor. Then the following conclusions holds.
\begin{itemize}
	\item [(1)]  There is at least a C-eigenvalue of $\mathscr{T}$ with associated left and right C-eigenvectors;
	\item[(2)] Assume that $\lambda$ is a C-eigenvalue  of $\mathscr{T}$ with its left and right C-eigenvectors  ${\bm x}$ and $ {\bm y}$. Then
	$$\lambda=\mathscr{T}{\bm x}{\bm y}{\bm y}=\sum_{i,j,k=1}^nx_iy_jy_k.$$
	Moreover,  $(\lambda, {\bm x}, -{\bm y})$, $(-\lambda, -{\bm x}, {\bm y})$, $(-\lambda, -{\bm x}, -{\bm y})$ are also C-eigenvalues and
	their associated left and right  C-eigenvectors of $\mathscr{T}$;
	\item[(3)] If the triples $(\tilde{\lambda}, \tilde{{\bm x}}, \tilde{{\bm y}})$  is made up of the largest C-eigenvalue of $\mathscr{T}$ and its associated left and right C-eigenvectors, then
	$$\tilde{\lambda}=\mathscr{T}\tilde{{\bm x}}\tilde{{\bm y}}\tilde{{\bm y}}= \max \{\mathscr{T}{\bm x}{\bm y}{\bm y}: {\bm x}^\top {\bm x}=1 ,\ 	{\bm y}^\top  {\bm y}=1\}. $$
	Furthermore, $\tilde{\lambda}\tilde{{\bm x}}\circ\tilde{{\bm y}}\circ\tilde{{\bm y}}$ forms the best rank-one piezoelectric-type approximation
	of $\mathscr{T}$, here ${\bm x}\circ{\bm y}$ is outer product (or tensor product) of ${\bm x}$ and ${\bm y}$.
\end{itemize}
\end{theorem}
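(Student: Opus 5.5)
The natural route is a variational one. Consider the continuous function $f({\bm x},{\bm y})=\mathscr{T}{\bm x}{\bm y}{\bm y}$ on the compact set $S=\{({\bm x},{\bm y}): {\bm x}^\top{\bm x}=1,\ {\bm y}^\top{\bm y}=1\}$. By Weierstrass, $f$ attains its maximum at some $(\tilde{\bm x},\tilde{\bm y})\in S$. We then write the Lagrangian
\[
L({\bm x},{\bm y},\mu,\nu)=\mathscr{T}{\bm x}{\bm y}{\bm y}-\tfrac{\mu}{2}({\bm x}^\top{\bm x}-1)-\nu({\bm y}^\top{\bm y}-1).
\]
The constraint gradients are $2{\bm x}$ and $2{\bm y}$ in separate blocks, so they are linearly independent on $S$ and the KKT conditions hold at the maximizer. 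Using $T_{ijk}=T_{ikj}$, we have $\nabla_{\bm x} f=\mathscr{T}{\bm y}{\bm y}$ and $\nabla_{\bm y} f=2\mathscr{T}{\bm x}{\bm y}$, where $(\mathscr{T}{\bm x}{\bm y})_k=\sum_{i,j}T_{ijk}x_iy_j$. The symmetry is exactly what makes the two partial derivatives in ${\bm y}$ coincide. Stationarity therefore reads $\mathscr{T}{\bm y}{\bm y}=\mu{\bm x}$ and $\mathscr{T}{\bm x}{\bm y}=\nu{\bm y}$.

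Next we identify the multipliers. Taking the inner product of the first equation with ${\bm x}$ gives $\mu=\mathscr{T}{\bm x}{\bm y}{\bm y}$. Taking the inner product of the second with ${\bm y}$ gives $\nu=\sum T_{ijk}x_iy_jy_k$, which is the same quantity. Hence $\mu=\nu=:\lambda$, so every KKT point on $S$ is a C-eigentriple. This proves (1).

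The same contraction argument works for any C-eigentriple $(\lambda,{\bm x},{\bm y})$: contracting $\mathscr{T}{\bm y}{\bm y}=\lambda{\bm x}$ with ${\bm x}$ yields $\lambda=\mathscr{T}{\bm x}{\bm y}{\bm y}$, which is the first claim of (2). For the sign variants, the left-hand sides $\mathscr{T}{\bm y}{\bm y}$ and $\mathscr{T}{\bm x}{\bm y}$ are quadratic in ${\bm y}$ and bilinear in $({\bm x},{\bm y})$, respectively. We substitute $({\bm x},-{\bm y})$, $(-{\bm x},{\bm y})$ and $(-{\bm x},-{\bm y})$ and check the signs in each case.
\begin{itemize}
\item For $({\bm x},-{\bm y})$ with $\lambda$: $\mathscr{T}(-{\bm y})(-{\bm y})=\lambda{\bm x}$ and $\mathscr{T}{\bm x}(-{\bm y})=\lambda(-{\bm y})$, so the triple is valid.
\item For $(-{\bm x},{\bm y})$ with $-\lambda$: $\mathscr{T}{\bm y}{\bm y}=\lambda{\bm x}=(-\lambda)(-{\bm x})$ and $\mathscr{T}(-{\bm x}){\bm y}=-\lambda{\bm y}$, so the triple is valid.
\item For $(-{\bm x},-{\bm y})$ with $-\lambda$: the two equations likewise reduce to the original ones.
\end{itemize}
Unit length is preserved throughout.

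For (3), the two directions fit together as follows. Let $\lambda^*=\max_S f$. By the argument above, the maximizer is a C-eigentriple with eigenvalue $\lambda^*$, so the largest C-eigenvalue satisfies $\tilde\lambda\ge\lambda^*$. Conversely, every C-eigenvalue equals $f({\bm x},{\bm y})$ for some $({\bm x},{\bm y})\in S$ by (2), so $\tilde\lambda\le\lambda^*$. Hence $\tilde\lambda=\lambda^*$, which also shows the largest C-eigenvalue exists, since C-eigenvalues form a bounded set. Applying (2) to the triple $(\tilde\lambda,\tilde{\bm x},\tilde{\bm y})$ gives $\tilde\lambda=\mathscr{T}\tilde{\bm x}\tilde{\bm y}\tilde{\bm y}$.

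The best rank-one approximation statement is the step needing most care. We minimize $\|\mathscr{T}-\lambda\,{\bm x}\circ{\bm y}\circ{\bm y}\|_F^2$ over $\lambda\in\mathbb{R}$ and unit ${\bm x},{\bm y}$. Expanding gives $\|\mathscr{T}\|_F^2-2\lambda\,\mathscr{T}{\bm x}{\bm y}{\bm y}+\lambda^2$, because $\|{\bm x}\circ{\bm y}\circ{\bm y}\|_F=1$. Optimizing in $\lambda$ gives $\lambda=\mathscr{T}{\bm x}{\bm y}{\bm y}$ and residual $\|\mathscr{T}\|_F^2-(\mathscr{T}{\bm x}{\bm y}{\bm y})^2$. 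So the problem reduces to maximizing $|f|$ on $S$. By the sign symmetry in (2), $\max_S|f|=\max_S f=\tilde\lambda$. Thus $\tilde\lambda\,\tilde{\bm x}\circ\tilde{\bm y}\circ\tilde{\bm y}$ is optimal among piezoelectric-type rank-one tensors. The main obstacle I anticipate is only bookkeeping. The derivative $\nabla_{\bm y}f=2\mathscr{T}{\bm x}{\bm y}$ requires the symmetry $T_{ijk}=T_{ikj}$, and $\mathscr{T}{\bm x}{\bm y}$ must be read with the contraction on the first two indices, as in the Definition.
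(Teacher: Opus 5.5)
The paper does not prove this theorem; it quotes it from Chen--J\'akli--Qi. Your argument is correct and is essentially the standard variational proof from that source: a maximizer of $\mathscr{T}{\bm x}{\bm y}{\bm y}$ exists on the product of unit spheres, LICQ gives the KKT system, contraction identifies both multipliers with $\mathscr{T}{\bm x}{\bm y}{\bm y}$, the sign variants are checked directly, and the Frobenius expansion reduces the rank-one problem to maximizing $|\mathscr{T}{\bm x}{\bm y}{\bm y}|$. If you want the approximation claim to cover every rank-one tensor with $T_{ijk}=T_{ikj}$, add that ${\bm u}\circ{\bm v}\circ{\bm w}$ has this symmetry only when ${\bm w}$ is parallel to ${\bm v}$, so your family $\lambda\,{\bm x}\circ{\bm y}\circ{\bm y}$ loses no generality.
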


As is well-known, on the unit sphere, the maximum of a quadratic form is the largest eigenvalue of its coefficient matrix.   Then for $2\times2$ matrix, the following conclusions are obvious.

\begin{lemma}\label{lem:21}
	Let $M=(m_{ij})$ be a $2\times2$ symmetry matrix. Then 
	$$\max\{{\bm x}^\top M{\bm x}: x_1^2+x_2^2=1\} = \dfrac12\left(m_{11}+m_{22}+\sqrt{(m_{11}-m_{22})^2+4m_{12}^2}\right).$$
\end{lemma}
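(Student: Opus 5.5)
We prove Lemma \ref{lem:21} by parametrizing the unit circle and reducing the problem to maximizing a single sinusoid. We could instead invoke the spectral theorem: for a real symmetric $M$, the maximum of ${\bm x}^\top M{\bm x}$ over the unit circle is the largest eigenvalue. We then solve the characteristic equation $\lambda^2-(m_{11}+m_{22})\lambda+(m_{11}m_{22}-m_{12}^2)=0$ and take the larger root. The direct route below avoids citing the spectral theorem and yields the same formula.

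Write $x_1=\cos t$ and $x_2=\sin t$ with $t\in[0,2\pi)$; this covers every point of the unit circle exactly once. Because $M$ is symmetric, $m_{21}=m_{12}$, so
\[
{\bm x}^\top M{\bm x}=m_{11}\cos^2 t+2m_{12}\sin t\cos t+m_{22}\sin^2 t .
\]
Using the double-angle identities, this becomes
\[
{\bm x}^\top M{\bm x}=\frac{m_{11}+m_{22}}{2}+\frac{m_{11}-m_{22}}{2}\cos 2t+m_{12}\sin 2t .
\]

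The constant term does not depend on $t$, so we only need to maximize $A\cos 2t+B\sin 2t$, where $A=\frac{m_{11}-m_{22}}{2}$ and $B=m_{12}$. By the Cauchy--Schwarz inequality, $A\cos 2t+B\sin 2t\le\sqrt{A^2+B^2}$. Equality holds when $(\cos 2t,\sin 2t)=(A,B)/\sqrt{A^2+B^2}$, which is attainable since $2t$ ranges over a full period. If $A=B=0$, the bound is $0$ and any $t$ attains it.

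Substituting $A$ and $B$ gives
\[
\sqrt{A^2+B^2}=\tfrac12\sqrt{(m_{11}-m_{22})^2+4m_{12}^2},
\]
so the maximum equals $\frac12\left(m_{11}+m_{22}+\sqrt{(m_{11}-m_{22})^2+4m_{12}^2}\right)$, as claimed. There is no real obstacle; the only care needed is to check that the bound is actually attained, including the degenerate case $A=B=0$.
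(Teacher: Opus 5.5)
Your argument is correct, but your main route is not the paper's. The paper gives no real proof. It appeals to the ``well-known'' fact that the maximum of a quadratic form on the unit sphere is the largest eigenvalue of its matrix, and the stated formula is then the larger root of $\lambda^2-(m_{11}+m_{22})\lambda+m_{11}m_{22}-m_{12}^2=0$, whose discriminant is $(m_{11}-m_{22})^2+4m_{12}^2$. That is exactly the alternative you mention in your first paragraph and then set aside.

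The spectral route takes one line once the Rayleigh principle is granted, and it works unchanged in any dimension. It also identifies the value as $\lambda_{\max}(M)$, which matches the paper's eigenvalue viewpoint. Its cost is that it rests on the spectral theorem and gives the maximizer only implicitly, as an eigenvector.

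Your trigonometric reduction with Cauchy--Schwarz applies only to the $2\times 2$ case, but it is fully self-contained. It proves attainment directly, including the degenerate case $A=B=0$. It also gives the maximizing angle explicitly through $(\cos 2t,\sin 2t)=(A,B)/\sqrt{A^2+B^2}$. In Theorem~\ref{thm:32} the variables are $z_1=\cos\theta$ and $z_2=\sin\theta$, so your parametrization gives the optimal $\theta$ for each fixed $\bm a$ directly. It further shows that the maximum is already attained with $t$ restricted to the half-period $[0,\pi)$, which is consistent with the physical range of $\theta$.
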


\section{The Largest Effective SHG Coefficient}
Let $\mathscr{T}=\overleftrightarrow{\bm\chi}^{(2)}\in\mathbb{R}^{3\times3\times3}$ be a nonlinear susceptibility tensor in uniaxial crystals. Assume the vectors ${\bm a}$ and $ {\bm b}$ are definited by Eqs.\eqref{eq:21} and  \eqref{eq:22} or \eqref{eq:24}, respectively. Then we have  \begin{align} d^1_{\text{eff}}({\bm a}, {\bm b})=&\mathscr{T}{\bm b}{\bm a}{\bm a}=\sum_{i,j,k=1}^3T_{ijk}b_ia_ja_k\nonumber\\
=&T_{111}z_1a_1^2a_2+2T_{112}z_1a_1a^2_2+T_{122}z_1a^3_2\nonumber\\&-T_{211}z_1a_1^3-2T_{212}z_1a^2_1a_2-T_{222}z_1a_1a^2_2\nonumber\\&+T_{311}z_2a_1^2+2T_{312}z_2a_1a_2+2T_{322}z_2a^2_2\nonumber\\
=&z_1\left(a_2Q_1({\bm a})-a_1Q_2({\bm a})\right)+z_2Q_3({\bm a}),\label{eq:31}\end{align}
where  $Q_i({\bm a})=\sum\limits_{j,k=1}^2 T_{ijk}a_ja_k$ is a quadratic form about ${\bm a}$, and 
\begin{align}d^2_{\text{eff}}({\bm b}, {\bm a})=&\mathscr{T}{\bm a}{\bm b}{\bm b}=\sum_{i,j,k=1}^3T_{ijk}a_ib_jb_k\nonumber\\
=&T_{111}z^2_1a_1a^2_2-2T_{112}z^2_1a^2_1a_2+T_{122}z^2_1a^3_1\nonumber\\&+2T_{113}z_1z_2a_1a_2-2T_{123}z_1z_2a^2_1+T_{133}z^2_2a_1\nonumber\\&+T_{211}z^2_1a^3_2-2T_{212}z^2_1a_1a^2_2+T_{222}z^2_1a^2_1a_2\nonumber\\&+2T_{213}z_1z_2a^2_2-2T_{223}z_1z_2a_1a_2+T_{233}z^2_2a_2\nonumber\\
=&z^2_1R_{11}({\bm a})+2z_1z_2R_{12}({\bm a})+z^2_2R_{22}({\bm a}),\label{eq:32}\end{align} here 
$$R_{11}({\bm a}))=T_{111}a_1a^2_2+(T_{222}-2T_{112})a^2_1a_2+(T_{122}-2T_{212})a^3_1+T_{211}a^3_2,$$
$$R_{12}({\bm a}))=(T_{113}-T_{223})a_1a_2-T_{123}a^2_1+T_{213}a^2_2, R_{22}({\bm a})=T_{133}a_1+T_{233}a_2.$$

\begin{theorem}\label{thm:31}
	Let $\mathscr{T}$ be a nonlinear susceptibility tensor.  Then in uniaxial crystals, the maximum model \eqref{eq:27} of effective SHG coefficient may cut the number of variables in half to $2$. That is, 
\begin{equation}\label{eq:33}
	d_{\text{eff}}^1=\max\left\{\sqrt{\left(a_2Q_1({\bm a})-a_1Q_2({\bm a})\right)^2+(Q_3({\bm a}))^2}: a_1^2+a_2^2=1\right\}.
\end{equation}
	\end{theorem}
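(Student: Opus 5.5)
The plan is to fix $\bm a$ and maximize over $(z_1,z_2)$ first. By \eqref{eq:31}, for fixed $\bm a$ with $a_1^2+a_2^2=1$ and $a_3=0$, the objective of \eqref{eq:27} is linear in $\bm z=(z_1,z_2)^\top$:
\[
d^1_{\text{eff}}(\bm a,\bm b)=\bm u(\bm a)^\top \bm z,\qquad \bm u(\bm a)=\bigl(a_2Q_1(\bm a)-a_1Q_2(\bm a),\,Q_3(\bm a)\bigr)^\top .
\]
Since $z_1^2+z_2^2=1$, the Cauchy--Schwarz inequality gives $\bm u(\bm a)^\top\bm z\le \|\bm u(\bm a)\|$. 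If $\bm u(\bm a)\neq 0$, equality holds at $\bm z=\bm u(\bm a)/\|\bm u(\bm a)\|$. If $\bm u(\bm a)=0$, both sides vanish. Hence
\[
\max_{z_1^2+z_2^2=1} d^1_{\text{eff}}(\bm a,\bm b)=\sqrt{\left(a_2Q_1(\bm a)-a_1Q_2(\bm a)\right)^2+(Q_3(\bm a))^2}.
\]
This is the one-variable analogue of Lemma \ref{lem:21}, now for a linear rather than a quadratic form.

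Next I take the maximum over $\bm a$ on the unit circle. Because the constraint set in \eqref{eq:27} is the product of two unit circles (with $a_3=0$), the iterated maximum equals the joint maximum:
\[
d^1_{\text{eff}}=\max_{\bm a}\max_{\bm z}\,\bm u(\bm a)^\top\bm z .
\]
Both sets are compact and the functions are continuous, so every maximum is attained, and \eqref{eq:33} follows. The count of variables drops from the four unknowns $a_1,a_2,z_1,z_2$ to the two unknowns $a_1,a_2$.

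I should also confirm that the $\bm z$ in \eqref{eq:27} may range over the whole unit circle. Physically $\theta\in[0,\pi]$, so $z_2\ge0$. However, replacing $\bm a$ by $-\bm a$ leaves every $Q_i(\bm a)$ unchanged and flips the sign of the first component of $\bm u(\bm a)$. Together with the sign symmetry of the formula, this shows that nothing is lost: the model \eqref{eq:27} as stated imposes only $z_1^2+z_2^2=1$, and I will work with it as written.

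The main point needing care is the factorization \eqref{eq:31} itself, namely that $\bm b=(a_2z_1,-a_1z_1,z_2)^\top$ makes $d^1_{\text{eff}}$ exactly $z_1(a_2Q_1-a_1Q_2)+z_2Q_3$. The displayed expansion in \eqref{eq:31} carries a coefficient $2T_{322}$, which looks like a typo for $T_{322}$. Since the result is phrased via $Q_3$, I will simply verify directly that $\sum_{i}b_i\sum_{j,k\le2}T_{ijk}a_ja_k=z_1(a_2Q_1-a_1Q_2)+z_2Q_3$, using $a_3=0$. Beyond that check, the argument is straightforward.
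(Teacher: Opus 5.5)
Your argument is correct and has the same two-stage structure as the paper's proof. Fix $\bm a$, maximize the linear function $\bm u(\bm a)^\top\bm z$ over the unit circle, then maximize $\|\bm u(\bm a)\|$ over $\bm a$. The only difference is the inner step. The paper forms a Lagrangian, solves $\bm z=\bm u(\bm a)/(2\mu_1)$ from stationarity in $z_1,z_2$, and gets $\mu_1=\pm\frac12\|\bm u(\bm a)\|$ from the constraint. It then reads off the critical values $\pm\|\bm u(\bm a)\|$. You instead use Cauchy--Schwarz together with an explicit maximizer. Your route is more elementary and slightly more complete. It certifies the global maximum directly, and it covers $\bm u(\bm a)=0$, where $\mu_1=0$ and the paper's division by $\mu_1$ is not legitimate. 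Your correction of $2T_{322}$ to $T_{322}$ in \eqref{eq:31} is also right.

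One aside should be dropped. The map $\bm a\mapsto-\bm a$ leaves $Q_3(\bm a)$ unchanged, and hence also the sign of the optimal $z_2=Q_3(\bm a)/\|\bm u(\bm a)\|$. So it does not show that the physical restriction $z_2\ge0$ loses nothing. In fact that restriction can lower the signed maximum: for the $4$mm class $\bm u(\bm a)=(0,\chi_{15})^\top$, so with $\chi_{15}<0$ the maximum over $z_2\ge0$ is $0$ rather than $|\chi_{15}|$. Only $|d^1_{\text{eff}}|$ is insensitive to that restriction, via $\bm z\mapsto-\bm z$. Since you, like the theorem, work with \eqref{eq:27} as written, this does not affect your proof.
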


{\bf Proof.} It follows from the constrained optimization model \eqref{eq:27} that we may construct the Lagrangian function as follows,
	\[
	\mathcal{L}(a_1,a_2,z_1,z_2,\mu_1,\mu_2)=\mathscr{T}{\bm b}{\bm a}{\bm a}+\mu_1\left(1-z_1^2-z_2^2\right)+\mu_2\left(1-a_1^2-a_2^2\right).
	\]
	By the optimality conditions for constrained optimization problems,  we may	take partial derivatives with respect to $z_1, z_2$ and set them to zero to yield (noticing \eqref{eq:31})
	\[
	\begin{cases}
		\frac{\partial\mathcal{L}}{\partial z_1}=a_2Q_1({\bm a})-a_1Q_2({\bm a})-2\mu_1z_1=0 ,\\
		\frac{\partial\mathcal{L}}{\partial z_2}=Q_3({\bm a})-2\mu_1z_2=0.
	\end{cases}
	\]
		Solve the system of such  equations to obtain 
	\[
	z_1=\frac{a_2Q_1({\bm a})-a_1Q_2({\bm a})}{2\mu_1}, \quad z_2=\frac{Q_3({\bm a})}{2\mu_1}.
	\]
	
	By substituting  them into the unit constraint $z_1^2+z_2^2=1$, we have
	\[
	\mu_1=\pm\frac{1}{2}\sqrt{\left(a_2Q_1({\bm a})-a_1Q_2({\bm a})\right)^2+(Q_3({\bm a}))^2}.
	\]
	
Let's put $\mu_1,\ z_1$ and $z_2$ into  the objective function $\mathscr{T}{\bm b}{\bm a}{\bm a}$ and simplify it. Then we obtain 
	\[
	\mathscr{T}{\bm b}{\bm a}{\bm a}=\pm\sqrt{\left(a_2Q_1({\bm a})-a_1Q_2({\bm a})\right)^2+Q_3({\bm a})^2}.
	\]
	
Taking the maximum value with respect to $\bm a$, as required.
\qed

\begin{theorem}\label{thm:32}
	Let $\mathscr{T}$ be a nonlinear susceptibility tensor.  Assume that $R_{ij}({\bm a})$ is defined in Eq. \eqref{eq:32} for $i,j=1,2$.  Then in uniaxial crystals, the maximum model \eqref{eq:28} of effective SHG coefficient may cut the number of variables in half to $2$. That is, 
	\begin{equation*}\label{eq:34}
		\aligned &d_{\text{eff}}^2=\max\{\mathscr{T}{\bm a}{\bm b}{\bm b}: \ a_1^2+a_2^2=z_1^2+z_2^2=1\mbox{ and }a_3=0 \}\\
		=&\dfrac12\max\left\{R_{11}({\bm a})+R_{22}({\bm a})+\sqrt{\left(R_{11}({\bm a})-R_{22}({\bm a})\right)^2+4R_{12}({\bm a})^2}: a_1^2+a_2^2=1\right\}.\endaligned
	\end{equation*}
\end{theorem}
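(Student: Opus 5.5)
The proof reduces to a two-variable eigenvalue problem by fixing ${\bm a}$ and maximizing over ${\bm z}=(z_1,z_2)^\top$ first. By Eq. \eqref{eq:32}, for each fixed ${\bm a}$ with $a_1^2+a_2^2=1$, $a_3=0$, the objective $\mathscr{T}{\bm a}{\bm b}{\bm b}$ is a quadratic form in ${\bm z}$:
\[
\mathscr{T}{\bm a}{\bm b}{\bm b}={\bm z}^\top M({\bm a}){\bm z},\qquad M({\bm a})=\begin{pmatrix} R_{11}({\bm a}) & R_{12}({\bm a})\\ R_{12}({\bm a}) & R_{22}({\bm a})\end{pmatrix}.
\]
The matrix $M({\bm a})$ is symmetric by construction, since the cross term $2z_1z_2R_{12}({\bm a})$ has been split evenly into the two off-diagonal entries.

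The first step is to check that the coefficients in Eq. \eqref{eq:32} are correct. Substitute ${\bm b}=(a_2z_1,-a_1z_1,z_2)^\top$ from \eqref{eq:24} into $\sum_{i,j,k}T_{ijk}a_ib_jb_k$ with $a_3=0$, use $T_{ijk}=T_{ikj}$, and group the terms by $z_1^2$, $z_1z_2$ and $z_2^2$. This is routine bookkeeping; the paper has already displayed the result, so I would take $R_{11},R_{12},R_{22}$ as given.

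The second step is to decouple the double maximization. The feasible set in \eqref{eq:28} is a product of two unit circles, one for ${\bm a}$ and one for ${\bm z}$. Hence
\[
d^2_{\text{eff}}=\max_{a_1^2+a_2^2=1}\ \max_{z_1^2+z_2^2=1} {\bm z}^\top M({\bm a}){\bm z}.
\]
Both maxima are attained, since the objective is continuous on compact sets. Note that ${\bm z}=(\cos\theta,\sin\theta)$ ranges over the full unit circle, which the model \eqref{eq:28} explicitly allows.

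The third step is to apply Lemma \ref{lem:21} to the inner maximum for each fixed ${\bm a}$. This gives $\frac12\bigl(R_{11}+R_{22}+\sqrt{(R_{11}-R_{22})^2+4R_{12}^2}\bigr)$, evaluated at ${\bm a}$. Taking the outer maximum over ${\bm a}$ on the unit circle then yields the stated formula.

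The only real subtlety is justifying the interchange, that is, that $\max_{{\bm a},{\bm z}}$ equals $\max_{\bm a}\max_{\bm z}$. This is elementary for a product of compact sets. Unlike in Theorem \ref{thm:31}, I would avoid Lagrange multipliers and rely on this direct argument.
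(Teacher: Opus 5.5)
Your argument is the paper's argument: fix $\bm a$, write the objective as $\bm z^\top M(\bm a)\bm z$, apply Lemma~\ref{lem:21}, then maximize over $\bm a$. Your explicit justification of $\max_{\bm a,\bm z}=\max_{\bm a}\max_{\bm z}$ on a product of compact sets is a small improvement, since the paper leaves it implicit. That reasoning is sound \emph{provided} $R_{11},R_{12},R_{22}$ denote the actual coefficients of $z_1^2$, $2z_1z_2$, $z_2^2$ in $\mathscr{T}\bm a\bm b\bm b$.

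\textbf{Gap: the coefficient check you skipped.} The bookkeeping you called routine and took ``as given'' is exactly where the paper slips. Grouping the term-by-term expansion in Eq.~\eqref{eq:32} gives
\[
R_{11}(\bm a)=(T_{111}-2T_{212})a_1a_2^2+(T_{222}-2T_{112})a_1^2a_2+T_{122}a_1^3+T_{211}a_2^3 .
\]
That expansion itself correctly contains $-2T_{212}z_1^2a_1a_2^2$. The displayed $R_{11}$, however, attaches $-2T_{212}$ to $a_1^3$. The displayed polynomial equals the correct one minus $2T_{212}a_1(a_1^2-a_2^2)$.

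With the displayed $R_{11}$, both the identity $\mathscr{T}\bm a\bm b\bm b=\bm z^\top M(\bm a)\bm z$ and the final formula are false. Take the tensor whose only nonzero entries are $T_{212}=T_{221}=1$; it is symmetric in its last two indices. Then $\mathscr{T}\bm a\bm b\bm b=2a_2b_1b_2=-2a_1a_2^2z_1^2$, so the true value is $d^2_{\text{eff}}=4/(3\sqrt{3})$. The displayed $R_{11}=-2a_1^3$ would instead give $2$, which even exceeds $\lambda_{\max}(\mathscr{T})=1$.

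The paper's proof leans on the same display, so you inherit this defect rather than introduce it. Still, a proof that takes the displayed $R_{ij}$ as given establishes a false formula. To fix it, either carry out the grouping and use the corrected $R_{11}$, or state explicitly that $R_{ij}$ are \emph{defined} by the decomposition $\mathscr{T}\bm a\bm b\bm b=z_1^2R_{11}+2z_1z_2R_{12}+z_2^2R_{22}$.
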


{\bf Proof.} It follows from the constrained optimization model \eqref{eq:28} and Eq. \eqref{eq:32}  that the objective function $\mathscr{T}{\bm a}{\bm b}{\bm b}$ may be rewritten as follows,
\[
\mathscr{T}{\bm b}{\bm a}{\bm a}=z^2_1R_{11}({\bm a})+2z_1z_2R_{12}({\bm a})+z^2_2R_{22}({\bm a}),
\]
which a quadratic form given by the matrix,$$M({\bm a})=\begin{pmatrix}
	R_{11}({\bm a}) & R_{12}({\bm a})\\
	R_{12}({\bm a})& R_{22}({\bm a})
\end{pmatrix}.$$
By Lemma \ref{lem:21},  for each fixed ${\bm a},$ we have
$$\aligned &\max\{{\bm z}^\top M({\bm a}){\bm x}: z_1^2+z_2^2=1\} \\ =& \dfrac{R_{11}({\bm a})+R_{22}({\bm a})+\sqrt{\left(R_{11}({\bm a})-R_{22}({\bm a})\right)^2+4(R_{12}({\bm a}))^2}}2.\endaligned$$
By this time, taking the maximum value with respect to $\bm a$, as required.
\qed

\begin{theorem}[C-Eigenvalue Inequality] \label{thm:33} Let $\mathscr{T}=\overleftrightarrow{\bm\chi}^{(2)}\in\mathbb{R}^{3\times3\times3}$ be a nonlinear susceptibility tensor in uniaxial crystals, and let $\lambda_{\text{max}}(\mathscr{T})$ be the largest C-eigenvalue of  $\mathscr{T}$. Then the following inequalities hold:
	\[
	d_{\text{eff}}^1\leq\lambda_{\text{max}}(\mathscr{T}),\quad d_{\text{eff}}^2\leq\lambda_{\text{max}}(\mathscr{T}).
	\]
	And the equalities generally do not hold.
\end{theorem}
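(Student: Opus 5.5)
The plan is to compare the feasible sets. By Theorem \ref{thm:21}(3), $\lambda_{\max}(\mathscr{T})=\max\{\mathscr{T}{\bm x}{\bm y}{\bm y}: {\bm x}^\top{\bm x}={\bm y}^\top{\bm y}=1\}$, so it suffices to show that every admissible pair in \eqref{eq:27} and \eqref{eq:28} gives a value of $\mathscr{T}{\bm x}{\bm y}{\bm y}$ at some pair of unit vectors ${\bm x},{\bm y}$.

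For $d^2_{\text{eff}}$ this is immediate. Take $a_1^2+a_2^2=1$, $a_3=0$, $z_1^2+z_2^2=1$, and let ${\bm b}$ be given by \eqref{eq:24}. Then ${\bm a}^\top{\bm a}=1$ and ${\bm b}^\top{\bm b}=z_1^2(a_2^2+a_1^2)+z_2^2=1$. The objective in \eqref{eq:28} is exactly $\mathscr{T}{\bm x}{\bm y}{\bm y}$ with ${\bm x}={\bm a}$ and ${\bm y}={\bm b}$, so it is bounded by $\lambda_{\max}(\mathscr{T})$. Taking the maximum over the compact feasible set gives $d^2_{\text{eff}}\le\lambda_{\max}(\mathscr{T})$.

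For $d^1_{\text{eff}}$ the objective is $\mathscr{T}{\bm b}{\bm a}{\bm a}$. This again has the form $\mathscr{T}{\bm x}{\bm y}{\bm y}$, now with ${\bm x}={\bm b}$ and ${\bm y}={\bm a}$, both unit vectors. So the same restriction argument gives $d^1_{\text{eff}}\le\lambda_{\max}(\mathscr{T})$. This step uses only the partial symmetry $T_{ijk}=T_{ikj}$, not Kleinman symmetry, because the repeated vector sits in the last two slots. The orthogonality ${\bm a}^\top{\bm b}=0$ and $a_3=0$ only shrink the feasible set, which can only lower the maximum.

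The real work is the claim that equality generally fails. A strict inequality for one explicit tensor is enough. I would choose a tensor whose C-eigen maximiser needs ${\bm y}$ to have a nonzero third component while also requiring ${\bm x}$ not orthogonal to ${\bm y}$. The simplest candidate is $T_{333}=1$ with all other entries zero, which gives $\lambda_{\max}=1$ at ${\bm x}={\bm y}={\bm e}_3$. Then:
\begin{itemize}
\item In \eqref{eq:27}, $\mathscr{T}{\bm b}{\bm a}{\bm a}=b_3a_3^2=0$ because $a_3=0$, so $d^1_{\text{eff}}=0<1$.
\item In \eqref{eq:28}, $\mathscr{T}{\bm a}{\bm b}{\bm b}=a_3b_3^2=0$ for the same reason, so $d^2_{\text{eff}}=0<1$.
\end{itemize}
One can instead check this with Theorems \ref{thm:31} and \ref{thm:32}: $Q_3\equiv0$ and all $R_{ij}\equiv0$. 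If a physically meaningful example is preferred, take a crystal class with a dominant $d_{33}$, such as class $6mm$ or $3m$, keep $T_{333}$ large compared with the other entries, and compare the reduced formulas \eqref{eq:33} and Theorem \ref{thm:32} numerically with $\lambda_{\max}$. The main obstacle is stating ``generally'' precisely. I would phrase it as the existence of tensors, in fact an open set of them by continuity around the example above, for which both inequalities are strict.
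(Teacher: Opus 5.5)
Your proof is correct and is essentially the paper's: the paper also invokes Theorem~\ref{thm:21}(3) and notes that the feasible sets of \eqref{eq:25} and \eqref{eq:26} lie inside the product of unit spheres, so restricting the maximization can only lower the value. The one difference is your explicit strictness witness $T_{333}=1$ (giving $d^1_{\text{eff}}=d^2_{\text{eff}}=0<1=\lambda_{\max}$); the paper's proof does not address the ``equalities generally do not hold'' clause and leaves it to the later examples, such as the $\bar{4}2$m class, where $|\chi_{14}|<\frac{2\sqrt3}{3}|\chi_{14}|$.
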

{\bf Proof.} Since each nonlinear susceptibility tensor is a piezoelectric-type tensor, then from Theorem \ref{thm:21}, it follows that 
$$\aligned\lambda_{\text{max}}(\mathscr{T})=& \max \{\mathscr{T}{\bm x}{\bm y}{\bm y}: {\bm x}^\top {\bm x}={\bm y}^\top  {\bm y}=1\}\\
=& \max \{\mathscr{T}{\bm y}{\bm x}{\bm x}: {\bm x}^\top {\bm x}={\bm y}^\top  {\bm y}=1\}. \endaligned$$
Let $C=\{({\bm x}^\top,{\bm y}^\top)\in\mathbb{R}^3\times\mathbb{R}^3: {\bm x}^\top {\bm x}={\bm y}^\top  {\bm y}=1\}$.  Then the feasible regions of the optimization models \eqref{eq:25} and \eqref{eq:26} are $$D_1=\{({\bm y}^\top,{\bm x}^\top)\in C:  y_3=0,{\bm x}^\top{\bm y}=0\},$$ and  $$D_2=\{({\bm x}^\top,{\bm y}^\top)\in C:  x_3=0,{\bm x}^\top{\bm y}=0\},$$ respectively.  Obviously, $D_1\subset C$ and $D_2\subset C$.
	So the conclusions follow.
\qed

\section{Examples in Uniaxial Crystals}
Let $S=\{{\bm x}\in\mathbb{R}^3: {\bm x}^\top {\bm x}=x_1^2+x_2^2=1, x_3=0\}$ in the sequal.
\begin{example}[$\bar{4}2$m Crystal Class (KH$_2$PO$_4$)]
The non-zero components of the susceptibility tensor of the $\bar{4}2$m crystal class satisfy:
\[
T_{123}=T_{132}=T_{213}=T_{231}=T_{312}=T_{321}=\chi_{14},
\]
and all other components are zero.
\end{example}

{\bf Proof.}  Let ${\bm a}^\top {\bm a}=a_1^2+a_2^2=1$. Obviously, we have 
	\[
	Q_1({\bm a})=Q_2({\bm a})=0,
	\]
	\[
	Q_3({\bm a})=T_{312}a_1a_2+T_{321}a_2a_1=2\chi_{14}a_1a_2.
	\]
	Then
	$$
	d_{\text{eff}}^1=\max_{{\bm a}\in S}\sqrt{0+(2\chi_{14}a_1a_2)^2}=\max_{{\bm a}\in S}2|a_1||a_2||\chi_{14}|=\max_{{\bm a}\in S}(a_1^2+a_2^2)|\chi_{14}|,
$$	
and hence, $d_{\text{eff}}^1=|\chi_{14}|$.
	
	Now we calculate the second-type effective SHG coefficient. Obviously,
\[
R_{11}({\bm a})=0,\quad R_{12}({\bm a})=\chi_{14}(a_2^{2}-a_1^{2}),\quad R_{22}({\bm a})=0.
\]
Then
$$ \aligned
d_{\text{eff}}^2=&\dfrac12\max_{{\bm a}\in S}\left\{0+0+\sqrt{\left(0-0\right)^2+4\chi_{14}^{2}(a_2^{2}-a_1^{2})^2}\right\}\\
=&\dfrac12\max_{{\bm a}\in S}2|\chi_{14}||(a_2^{2}-a_1^{2})|
=\max_{{\bm a}\in S}|a_2^{2}-a_1^{2}||\chi_{14}|,\endaligned
$$
and hence, $d_{\text{eff}}^2=|\chi_{14}|$.

By Theroem \ref{thm:21}, the largest C-eigenvalue of susceptibility tensor may be yielded as follows,
$$\aligned\tilde{\lambda}=&\mathscr{T}\tilde{{\bm x}}\tilde{{\bm y}}\tilde{{\bm y}}\\
=& \max \{\mathscr{T}{\bm x}{\bm y}{\bm y}: {\bm x}^\top {\bm x}=1 ,\ 	{\bm y}^\top  {\bm y}=1\}\\
=&\max_{\|{\bm x}\|=1\atop\|{\bm y}\|=1}  2\chi_{14}(x_1y_2y_3 + x_2y_1y_3 + x_3y_1y_2)\\
=&\max_{\|{\bm y}\|=1}  2\chi_{14}\sqrt{y_1^2y_2^2+y_1^2y_3^2+y_2^2y_3^2}, \endaligned$$
Under the constraint $\|{\bm y}\|^2=y_1^2+y_2^2+y_3^2=1$, the maximum of
$y_1^2y_2^2+y_1^2y_3^2+y_2^2y_3^2$ is $\displaystyle\frac{1}{3}$
(when $y_1=y_2=y_3=\displaystyle\frac{1}{\sqrt{3}}$). Thus,
\[
\lambda_{\max} = \frac{2\sqrt{3}}{3}|\chi_{14}|,
\]
and then
\[ d_{\text{eff}}^1=d_{\text{eff}}^2<\lambda_{\text{max}}(C).
\]

\begin{example}[$4$mm Crystal Class (LiNbO$_3$)]
	The susceptibility  tensor of this crystal class has two independent non-zero components,
	\[
	T_{131}=T_{113}=T_{223}=T_{232}=T_{311}=T_{322}=\chi_{15},\quad T_{333}=\chi_{33}.
	\]
	and all other components are zero.
\end{example}

{\bf Proof.}  Let ${\bm a}^\top {\bm a}=a_1^2+a_2^2=1$. Obviously, we have
\[
Q_1({\bm a})=Q_2({\bm a})=0,
\quad
Q_3({\bm a})=\chi_{15}(a_{1}^{2}+a_{2}^{2}).
\]
Then
$$
d_{\text{eff}}^1=\max_{{\bm a}}|\chi_{15}(a_{1}^{2}+a_{2}^{2})|=|\chi_{15}| ,
$$	
and hence, $d_{\text{eff}}^1=|\chi_{15}|$.

Secondly,we have
\[
R_{11}({\bm a})=0,\quad R_{12}({\bm a})=0,\quad R_{22}({\bm a})=0.
\]
Then,
$d_{\text{eff}}^2=0$.

By Theroem \ref{thm:21}, the largest C-eigenvalue of susceptibility tensor may be established
$$\aligned\tilde{\lambda}=&\mathscr{T}\tilde{{\bm x}}\tilde{{\bm y}}\tilde{{\bm y}}\\
=& \max \{\mathscr{T}{\bm x}{\bm y}{\bm y}: {\bm x}^\top {\bm x}=1 ,\ 	{\bm y}^\top  {\bm y}=1\}\\
=&\max_{\|{\bm x}\|=1\atop \|{\bm y}\|=1}(\chi_{15}(2x_1y_1y_3+2x_2y_2y_3+x_3(y_1^2+y_2^2))+\chi_{33}x_3y_3^2)\\
=&\max_{\|{\bm y}\|=1} \sqrt{(2\chi_{15}y_1y_3)^2+(2\chi_{15}y_2y_3)^2
	+\left[\chi_{15}(y_1^2+y_2^2)+\chi_{33}y_3^2\right]^2}. \endaligned$$

Let $t=y_3^2$, so $y_1^2+y_2^2=1-t$. The maximum occurs at $t=0$ or $t=1$,
corresponding to $\lambda=|\chi_{15}|$ or $|\chi_{33}|$. Thus:
\[
\lambda_{\max} = \max\left\{|\chi_{15}|,|\chi_{33}|\right\}.
\]
So:
\[
d_{\text{eff}}^1=|\chi_{15}|\leq\lambda_{\text{max}}(C),~d_{\text{eff}}^2=0<\lambda_{\text{max}}(C)
.\]
\qed	

\begin{example}[$4$ Crystal Class (Urea)]
	The non-zero components of the susceptibility tensor satisfy,
	\[
	T_{123}=-T_{132}=T_{213}=T_{231}=T_{312}=T_{321}=\chi_{14},
	\]
	\[
	T_{113}=T_{131}=-T_{223}=-T_{232}=T_{311}=-T_{322}=\chi_{15}.
	\]
	and all other components are zero.
\end{example}

{\bf Proof.} Let ${\bm a}^\top {\bm a}=a_1^2+a_2^2=1$. Obviously, we have
\[
Q_1({\bm a})=Q_2({\bm a})=0,
\]
\[
Q_3({\bm a})=\chi_{15}(a_{1}^{2}-a_{2}^{2})+2\chi_{14}a_1a_2.
\]
Then
$$
d_{\text{eff}}^1=\max_{{\bm a}}|\chi_{15}(a_{1}^{2}-a_{2}^{2})+2\chi_{14}a_1a_2|=\sqrt{\chi_{14}^{2}+\chi_{15}^{2}},
$$	
and hence, $d_{\text{eff}}^1=\sqrt{\chi_{14}^{2}+\chi_{15}^{2}}$.
Secondly,we have
\[
R_{11}({\bm a})=0,\quad R_{12}({\bm a})=2\chi_{15}a_{1}a_{2}+\chi_{14}(a_2^{2}-a_1^{2}),\quad R_{22}({\bm a})=0.
\]
Then
$$
d_{\text{eff}}^2=\dfrac12\max_{{\bm a}}\left\{\sqrt{4R_{12}({\bm a})^{2}}\right\}
=\max_{{\bm a}}|R_{12}({\bm a})|=\sqrt{\chi_{14}^{2}+\chi_{15}^{2}},
$$
and hence, $d_{\text{eff}}^2=\sqrt{\chi_{14}^{2}+\chi_{15}^{2}}$.

By Theroem \ref{thm:21}, the largest C-eigenvalue of susceptibility tensor may be obtained as follows, 
$$\aligned\tilde{\lambda}
=& \max \{\mathscr{T}{\bm x}{\bm y}{\bm y}: {\bm x}^\top {\bm x}=1 ,\ 	{\bm y}^\top  {\bm y}=1\}\\
=&\max_{\|\boldsymbol{y}\|=1} \sqrt{(2\chi_{15}y_1y_3)^2
	+\left(2\chi_{14}y_1y_3-2\chi_{15}y_2y_3\right)^2
	+\left[2\chi_{14}y_1y_2-\chi_{15}(y_1^2+y_2^2)\right]^2}
. \endaligned$$

Substitute the symmetric solution $y_1=y_2=y_3=\displaystyle\frac{1}{\sqrt{3}}$ and simplify 
\[
\lambda_{\max} = \sqrt{\chi_{14}^2+\chi_{15}^2+\sqrt{(\chi_{14}^2+\chi_{15}^2)^2+(\chi_{14}\chi_{15})^2}}
.\]

So,
\[
d_{\text{eff}}^1=d_{\text{eff}}^2=\sqrt{\chi_{14}^{2}+\chi_{15}^{2}}=\lambda_{\text{max}}(C).
\]
\qed

\begin{example}[$62$m Crystal Class (Benitoite)]
The non-zero components of the susceptibility tensor of the 62m crystal class satisfy,
	\[
	a_{222}=-a_{112}=-a_{121}=-a_{211}=\chi_{22}.
	\]
	and all other components are zero.
\end{example}

{\bf Proof.} Let ${\bm a}^\top {\bm a}=a_1^2+a_2^2=1$. Obviously, we have
\[
Q_1({\bm a})=-2\chi_{22}a_{1}a_{2},
\quad Q_2({\bm a})=\chi_{22}(a_{2}^{2}-a_{1}^{2}),
\quad
Q_3({\bm a})=0.
\]
Then
$$ \aligned d_{\text{eff}}^1=&\max_{{\bm a}}\sqrt{(-2a_{2}\chi_{22}a_{1}a_{2}-a_{1}\chi_{22}a_{2}^{2}+a_{1}\chi_{22}a_{1}^{2})^{2}} \\
=& |\chi_{22}|\max_{{\bm a}}|-3a_{1}a_{2}^{2}+a_{1}^{3}|
=|\chi_{22}|.\endaligned
$$	
and hence, $d_{\text{eff}}^1=|\chi_{22}|$.
Secondly,we have
\[
R_{11}({\bm a})=\chi_{22}(3a_{1}^{2}a_{2}-a_{2}^{3}),\quad R_{12}({\bm a})=0,\quad R_{22}({\bm a})=0.
\]
Then
$$
d_{\text{eff}}^2=\dfrac12\max_{{\bm a}\in S}\left\{R_{11}({\bm a})+|R_{11}({\bm a})|\right\}
=\max_{{\bm a}\in S}|\chi_{22}(3a_{1}^{2}a_{2}-a_{2}^{3})|=|\chi_{22}|,
$$
and hence, $d_{\text{eff}}^2=|\chi_{22}|$.

By Theroem \ref{thm:21}, the largest C-eigenvalue of susceptibility tensor may be yielded as follows,
$$\aligned\tilde{\lambda}=&\mathscr{T}\tilde{{\bm x}}\tilde{{\bm y}}\tilde{{\bm y}}\\
=& \max \{\mathscr{T}{\bm x}{\bm y}{\bm y}: {\bm x}^\top {\bm x}=1 ,\ 	{\bm y}^\top  {\bm y}=1\}\\
=&\max_{\|\boldsymbol{y}\|=1} \sqrt{(-2\chi_{22}y_1y_2)^2
	+\left[\chi_{22}(y_2^2-y_1^2)\right]^2}\\
=& |\chi_{22}|. \endaligned$$

Under $y_1^2+y_2^2+y_3^2=1$, $y_1^2+y_2^2 \le 1$ and reaches 1 when $y_3=0$. Thus:
\[
\lambda_{\max} = |\chi_{22}|.
\]

So,
\[
d_{\text{eff}}^1=d_{\text{eff}}^2=\lambda_{\text{max}}(C)=|\chi_{22}|.
\]
\qed

\begin{example}[$6$ Crystal Class ($\alpha$-LiIO$_{3}$)]
	The non-zero components of the susceptibility tensor satisfy:
	\[
	T_{111}=-T_{122}=-T_{212}=-T_{221}=\chi_{11}, \]
	\[
	T_{222}=-T_{112}=-T_{121}=-T_{211}=\chi_{22}.
	\]
	and all other components are zero.
\end{example}

{\bf Proof.} Let ${\bm a}^\top {\bm a}=a_1^2+a_2^2=1$. Obviously, we have

\[
Q_1({\bm a})=\chi_{11}(a_{1}^{2}-a_{2}^{2})-2\chi_{22}a_{1}a_{2},
\]
\[
Q_2({\bm a})=\chi_{22}(a_{2}^{2}-a_{1}^{2})-2\chi_{11}a_{1}a_{2},
\]
\[
Q_3({\bm a})=0.
\]
Then
$$ \aligned d_{\text{eff}}^1=& \max_{{\bm a}\in S}\sqrt{(\chi_{11}(3a_{1}^{2}a_{2}-a_{2}^{3})-\chi_{22}(3a_{1}a_{2}^{2}-a_{1}^{3}))^{2}}\\
=& \max_{{\bm a}\in S}|(\chi_{11}(3a_{1}^{2}a_{2}-a_{2}^{3})-\chi_{22}(3a_{1}a_{2}^{2}-a_{1}^{3})|\\
=& \sqrt{\chi_{11}^{2}+\chi_{22}^{2}}.\endaligned
$$	
and hence, $d_{\text{eff}}^1=\sqrt{\chi_{11}^{2}+\chi_{22}^{2}}$.
Secondly,we have
\[
R_{11}({\bm a})=\chi_{11}(a_{1}a_{2}^{2}+a_{1^{3}})+\chi_{22}(3a_{1}^{2}a_{2}-a_{2}^{3}),\ R_{12}({\bm a})=0,\ R_{22}({\bm a})=0.
\]
Then
$$
d_{\text{eff}}^2=\dfrac12\max_{{\bm a}\in S}\left\{R_{11}({\bm a})+|R_{11}({\bm a})|\right\}
=\max_{{\bm a}\in S}|R_{11}({\bm a})|=\max(|\chi_{11}|,|\chi_{22}|),
$$
and hence, $d_{\text{eff}}^2=\max(|\chi_{11}|,|\chi_{22}|)$.

By Theroem \ref{thm:21}, the largest C-eigenvalue of susceptibility tensor may be yielded as follows,
$$\aligned\tilde{\lambda}
=& \max \{\mathscr{T}{\bm x}{\bm y}{\bm y}: {\bm x}^\top {\bm x}=1 ,\ 	{\bm y}^\top  {\bm y}=1\}\\
=& \max_{\|\boldsymbol{y}\|=1} \sqrt{
	\left[\chi_{11}(y_1^2-y_2^2)+(\chi_{22}-\chi_{11})y_1y_2\right]^2
	+\left[\chi_{22}(y_2^2-y_1^2)+(\chi_{11}-\chi_{22})y_1y_2\right]^2}. \endaligned$$

Let $y_1=\cos\theta$, $y_2=\sin\theta$.
Thus,
\[
\lambda_{\max} =\sqrt{\chi_{11}^{2}+\chi_{22}^{2}},
\]
and so, we have 
\[
d_{\text{eff}}^1=\lambda_{\text{max}}(C),~d_{\text{eff}}^2\leq\lambda_{\text{max}}(C).
\]
\qed

\section{Conclusion}
In this paper, the computability of effective SHG coefficient is discussed  for nonlinear optical susceptibility tensors in uniaxial crystals.  Firstly, the optimization models  are established for the calculation of  effective SHG coefficient.  Secondly, we study two optimization models and cut the number of variables of  such  maximum models  in half to $2$. Moreover, we also present a comparison between the effective SHG coefficient and C-eigenvalue of susceptibility tensor.  Finally, through calculations of typical crystal classes, the correctness and universality of the theory are verified, providing a new theoretical tool for the study of phase matching in nonlinear optics.
\section*{Declaration of competing interest}
The authors declare that they have no known competing financial interests or personal rela-tionships that could have appeared to influence the work reported in this paper.
\section*{Authors' contributions}These authors contributed equally to this work.
\section*{Availability of data and materials}
This manuscript has no associated data or the data will not be deposited. [Authors' comment: This is a theoretical study and there are no external data associated with the manuscript].

\end{document}